\renewcommand{\geq}{\geqslant}
\renewcommand{\leq}{\leqslant}
\newtheorem{theorem}{Theorem}[section]
\newtheorem{lemma}[theorem]{Lemma}
\newtheorem*{main-theorem}{Main Theorem}
\newtheorem*{remark*}{Remark}
\numberwithin{equation}{section}
\title[Wave breaking for fractional dispersion]
{Wave breaking for the Whitham equation \\ with fractional dispersion}
\author[Hur]{Vera~Mikyoung~Hur}
\address{Department of Mathematics, University of Illinois at Urbana-Champaign, Urbana, IL 61801 USA}
\email{verahur@math.uiuc.edu}
\author[Tao]{Lizheng~Tao}
\email{leedstao@illinois.edu}  
\begin{document}

\maketitle

\begin{abstract}
We show wave breaking for the Whitham equation in a range of fractional dispersion, 
i.e. the solution remains bounded but its slope becomes unbounded in finite time,
provided that the initial datum is sufficiently steep. 
\end{abstract}

\section{Introduction}\label{sec:intro}

We study finite-time blowup for the Whitham equation with fractional dispersion
\begin{equation}\label{E:main}
\partial_tu+\mathcal{H}\Lambda^\alpha u+u\partial_xu=0
\end{equation}
in the range $\alpha>0$, where $t\in\mathbb{R}$ denotes the temporal variable, 
$x\in\mathbb{R}$ is the spatial variable, and $u=u(x,t)$ is real-valued.
Moreover, $\mathcal{H}$ denotes the Hilbert transform
and $\Lambda=\sqrt{-\partial_x^2}$ is a Fourier multiplier, defined via its symbol as 
\[
\widehat{\Lambda f}(\xi)=|\xi|\widehat{f}(\xi).
\]
For $0<\alpha<1$ (see \cite{Hur-breaking}, for instance)
\begin{equation}\label{def:Lambda}
\mathcal{H}\Lambda^\alpha f(x)= \int^\infty_{-\infty} \frac{\text{sgn}(x-y)}{|x-y|^{1+\alpha}}(f(x)-f(y))~dy
\end{equation}
up to multiplication by a constant.

\

In the case of $\alpha=3$, most notably, \eqref{E:main} makes the Korteweg-de Vries (KdV) equation, 
and in the case of $\alpha=2$, it corresponds to the Benjamin-Ono equation. 
In the case of $\alpha=1/2$, moreover, \eqref{E:main} was argued in \cite{Hur-breaking} 
to have relevances to surface water waves in two dimensions in the infinite depths.
In particular, it shares in common with the physical problem 
the dispersion relation and scaling symmetry. 
Furthermore \eqref{E:main} belongs to the family of nonlinear nonlocal\footnote{
Note that \eqref{E:whitham} is nonlocal unless $m(\xi)$ is a polynomial of $i\xi$.} equations
(see \cite{NS94}, for instance)
\begin{equation}\label{E:whitham}
\partial_tu-\mathcal{M}\partial_xu+u\partial_xu=0\qquad 
\text{where}\quad\widehat{\mathcal{M}f}(\xi)=m(\xi)\widehat{f}(\xi),
\end{equation}
describing wave motions in various physical situations.
Note that \eqref{E:whitham} combines the nonlinearity, 
which compels singularities in short intervals of time,
and dispersion, which instead acts to spread out waves and make them decay over time. 

\

The global well-posedness for \eqref{E:main} is adequately understood 
in the case of $\alpha=3$ and $\alpha=2$. 
Recently there have been increasing research activities 
about regularity versus finite-time blowup for $1<\alpha<2$; 
see \cite{LPS} and references therein. 

\

The KdV equation well explains long wave phenomena in a channel of water, 
e.g. solitary waves, but it loses relevances for short and intermediately long waves.
In particular, waves in shallow water at times develop a vertical slope or a multi-valued profile
whereas the KdV equation prevents singularity formation from solutions.
Whitham (see \cite{Whitham}) therefore proposed \eqref{E:whitham}, for which 
$m(\xi)=\sqrt{\tanh\xi\,/\xi}$,  
as an alternative to the KdV equation, combining the full range\footnote{
Note that $\sqrt{\tanh\xi/\xi}$ is the phase speed of a plane wave with the spatial frequency $\xi$
near the quintessential state of water. Since 
\[
\sqrt{\tanh\xi/\xi}=1-\tfrac16\xi^2+O(\xi^4) \qquad \text{for $\xi\ll 1$},
\]
one may regard the KdV equation (after normalization of parameters) 
as to approximate up to second order the dispersion of the Whitham equation, 
and hence the water wave problem, in the long wavelength regime.}
of the dispersion of surface water waves and the nonlinearity of the shallow water equations.
Moreover Whitham advocated that it would capture {\em wave breaking},
i.e. the solution remains bounded but its slope becomes unbounded in finite time. 
Wave breaking was analytically confirmed in \cite{NS94} and \cite{CE98}, for instance, 
for \eqref{E:whitham}, provided that 
$\widehat{m} \in L^1$ away from zero and it satisfies a few other assumptions. 
Note however that the kernel associated with the integral representation of\footnote{
Note that $\mathcal{H}\Lambda^\alpha=-\partial_x\Lambda^{\alpha-1}$.} $\Lambda^{\alpha-1}$ 
is $|\cdot|^{-\alpha}$ up to multiplication by a constant, 
and hence the proofs in \cite{NS94} and \cite{CE98} may not be applicable 
for \eqref{E:main} in the range $0<\alpha<1$. 
Nevertheless, recently in \cite{CCG} and \cite{Hur-breaking}, 
a H\"older-norm blowup was shown for \eqref{E:main} for $0<\alpha<1$. 
Incidentally gradient blowup was shown in \cite{DDL09} and \cite{KNS08}, for instance,
for the Whitham equation with fractional diffusion (compare to \eqref{E:main})
\[
\partial_tu+\Lambda^\alpha u+u\partial_xu=0
\]
for $0<\alpha<1$. 

\

Here we promote the result in \cite{CCG} and \cite{Hur-breaking} to wave breaking
in the range $0<\alpha<1/2$, provided that the initial datum is in a Gevrey class and sufficiently steep.

\begin{theorem}[Wave breaking for $0<\alpha<1/2$] \label{thm:1/2}
Let $0<\alpha <{\displaystyle \frac{(1-\epsilon)^2}{3(1+\epsilon)^3-(1-\epsilon)^2}}$ 
for $\epsilon>0$ sufficiently small. 
Assume that $\phi \in H^\infty(\mathbb{R})$ satisfies 
\begin{gather}
\epsilon^2(\inf_{x\in\mathbb{R}}\phi'(x))^2>
\|\phi\|_{H^3}+\frac{2}{\alpha}\Big(3C_1+\frac{C_2}{1-\alpha}\Big)\label{A2:m1} \\
-\epsilon(1-\epsilon)^3\inf_{x\in\mathbb{R}}\phi'(x)>\frac{2}{\alpha(1-\alpha)}
\Big(3+\frac{1}{1-\alpha}\Big(\frac{C_1}{C_0}+\frac{C_2}{C_1}\Big)\Big),\label{A2:m2} \\
-\epsilon\frac{1+\epsilon}{1-\epsilon}\inf_{x\in\mathbb{R}}\phi'(x)>
\frac{6}{\alpha}(1+\epsilon^{1/\alpha}),\label{A2:m3}
\end{gather}
where
\begin{gather} 
\|\phi\|_{L^\infty}+\|\phi'\|_{L^\infty}<(1-\epsilon)C_0, \label{I:C0} \\
\|\phi'\|_{L^\infty}<(1-\epsilon)C_1<-(1+\epsilon)\inf_{x\in\mathbb{R}}\phi'(x), \label{I:C1} \\
0<\frac{1-\epsilon}{1+\epsilon}C_2<
-\frac{1/\alpha-1}{e}\Big(\frac23\Big)^{1/\alpha-1}\inf_{x\in\mathbb{R}}\phi'(x) \label{I:C2}
\end{gather}
and
\begin{equation}\label{I:Gevrey}
\|\phi^{(n)}\|_{L^\infty}<C_2(n-1)^{(n-1)/\alpha}\qquad \text{for $n=2, 3, \dots$}.
\end{equation}
Then the solution of the initial value problem associated with \eqref{E:main} and $u(x,0)=~\phi(x)$ 
exhibits wave breaking, i.e.
\[ 
|u(x,t)|<\infty \qquad \text{for all $x\in\mathbb{R}$}\quad\text{for all $t\in [0,T)$}
\]
but
\[
\inf_{x\in\mathbb{R}}\partial_xu(x,t) \to -\infty \qquad\text{as $t\to T-$}
\] 
for some $T>0$. Moreover
\begin{equation}\label{E:T}
-\frac{1}{\inf_{x\in\mathbb{R}}\phi'(x)}\frac{1}{1+\epsilon}<T<
-\frac{1}{\inf_{x\in\mathbb{R}}\phi'(x)}\frac{1}{(1-\epsilon)^2}.
\end{equation}
\end{theorem}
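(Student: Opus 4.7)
The strategy is a Riccati-type argument along particle trajectories, absorbing the fractional dispersion as a lower-order perturbation via the Gevrey assumption. Let $\zeta(t)$ solve the characteristic ODE $\dot\zeta(t)=u(\zeta(t),t)$ and write $w(t)=\partial_xu(\zeta(t),t)$. Differentiating \eqref{E:main} in $x$, evaluating along $\zeta$, and using $\partial_x\mathcal{H}\Lambda^\alpha=\Lambda^{\alpha+1}$ (which follows from $\mathcal{H}\Lambda^\alpha=-\partial_x\Lambda^{\alpha-1}$) give
\[
\dot w(t)=-w(t)^2-\Lambda^{\alpha+1}u(\zeta(t),t).
\]
Choosing $\zeta(0)$ so that $w(0)$ approximates $\inf_x\phi'(x)\ll 0$, I would then argue that if $|\Lambda^{\alpha+1}u|\le K$ uniformly on the expected interval of existence, with $K$ bounded by roughly $\epsilon^2 w(0)^2$, elementary comparison with the scalar equations $\dot W=-W^2\pm K$ forces $w\to-\infty$ in finite time and sandwiches the blowup time as in \eqref{E:T}. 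Boundedness of $u$ itself on the same interval follows analogously from $\dot v=-\mathcal{H}\Lambda^\alpha u(\zeta(t),t)$ together with a pointwise bound on $\mathcal{H}\Lambda^\alpha u$.

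The crux is therefore a quantitative pointwise estimate on $\Lambda^{\alpha+1}u$. Since $\alpha+1\in(1,2)$, I would use the principal-value representation, split the kernel integral into $|y|<\delta$ and $|y|\ge\delta$, bound the near piece by a Taylor expansion through order $n$ in terms of $\|u^{(n)}\|_{L^\infty}$, the intermediate piece by $\|u_x\|_{L^\infty}$, and the far piece by $\|u\|_{L^\infty}\delta^{-(1+\alpha)}$, and finally optimize in $\delta$ and $n$. The Gevrey bound \eqref{I:Gevrey} is precisely tailored so that this optimization closes; the resulting estimate has the shape
\[
|\Lambda^{\alpha+1}u|\;\le\;\tfrac{3}{\alpha}C_0^{1-\alpha}C_1^{\alpha}+\tfrac{1}{\alpha(1-\alpha)}\Bigl(\tfrac{C_1}{C_0}+\tfrac{C_2}{C_1}\Bigr),
\]
which is exactly the combination on the right of \eqref{A2:m2}; an analogous estimate on $\mathcal{H}\Lambda^\alpha u$ produces the combination in \eqref{A2:m3}, and the $H^3$-term in \eqref{A2:m1} originates from using $\|u\|_{H^3}$ to control $\dot v$ via a Sobolev embedding.

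To use these estimates dynamically, I would run a bootstrap on $[0,T_\ast)$: assume that the running analogues of $C_0,C_1,C_2$ exceed the initial constants by no more than the slack factor $1/(1-\epsilon)$ built into \eqref{I:C0}--\eqref{I:C2}, and close this using standard energy and commutator estimates on $\partial_x^nu$. The three inequalities \eqref{A2:m1}--\eqref{A2:m3} are engineered so that the slack survives until the Riccati blowup time $T\sim -1/\inf\phi'$; the balance of the $1/\alpha$ and $1/(1-\alpha)$ factors is exactly what forces the threshold $\alpha<(1-\epsilon)^2/(3(1+\epsilon)^3-(1-\epsilon)^2)$ in the statement. Local existence of a smooth solution with $H^\infty$ data I would take from standard parabolic regularization.

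The main obstacle will be the Gevrey propagation with the sharp growth $C_2(n-1)^{(n-1)/\alpha}$ in \eqref{I:Gevrey}: a usable bound on $\Lambda^{\alpha+1}u$ demands an inductive $L^\infty$ estimate on $\partial_x^nu$ whose constant grows no faster than that, which in turn requires a delicate commutator analysis of $[\partial_x^n,\mathcal{H}\Lambda^\alpha]u$ and of $\partial_x^n(uu_x)$, uniform in $n$. Everything else in the proof---the Riccati comparison, the kernel split, and the time-interval bookkeeping \eqref{E:T}---is then a matter of carefully tracking constants through the three conditions \eqref{A2:m1}--\eqref{A2:m3}.
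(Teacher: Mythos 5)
Your skeleton (characteristics, Riccati equation for the slope, treating the dispersive term as a perturbation of size $\epsilon^2\times(\text{slope})^2$) is the right one and matches the paper's, but there are genuine gaps in how you propose to close it. The central one is your claim of a \emph{uniform-in-time} pointwise bound $|\Lambda^{\alpha+1}u|\le K$ with $K$ a fixed constant built from $C_0,C_1,C_2$. No such bound can hold up to the breaking time: any estimate of this quantity (including your three-piece kernel split) necessarily involves $\|\partial_xu(t)\|_{L^\infty}$ and higher derivatives, all of which blow up as $t\to T-$. What is actually provable, and what the paper proves, is the \emph{relative} bound $|K_1(t;x)|<\epsilon^2m^2(t)$ where $m(t)=\inf_x\partial_xu(x,t)=m(0)q^{-1}(t)$; obtaining it requires a full bootstrap in which every derivative bound is weighted by a power of $q(t)$, namely $\|\partial_x^nu(t)\|_{L^\infty}<C_2(n-1)^{(n-1)/\alpha}q^{-1-(n-1)\sigma}(t)$, with the splitting scale chosen as $\delta=n^{-1/\alpha}q^{\sigma}(t)$ and the Leibniz sum in \eqref{eqn:vn} controlled by the combinatorial estimate \eqref{I:stirling}. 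This weighted induction is where the restriction on $\alpha$ comes from: closing the $n=2$ step forces $\sigma>3(1+\epsilon)^3/(1-\epsilon)^2-1$, and the $K_n$ estimate needs $\sigma\alpha<1$, which together give exactly $\alpha<(1-\epsilon)^2/(3(1+\epsilon)^3-(1-\epsilon)^2)$. Your attribution of the threshold to a balance of $1/\alpha$ against $1/(1-\alpha)$ is therefore not correct.

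Two further points. First, the ``delicate commutator analysis of $[\partial_x^n,\mathcal{H}\Lambda^\alpha]u$'' you flag as the main obstacle is vacuous: $\partial_x^n$ and $\mathcal{H}\Lambda^\alpha$ are both Fourier multipliers, so the commutator is zero; the genuine difficulty is the growth in $n$ of the bilinear sum $\sum_j\binom{n}{j}v_jv_{n+1-j}$ and of the constants in the $K_n$ estimate, which is what \eqref{I:Gevrey}, \eqref{I:C2} and Lemma~\ref{lem:stirling} are tuned to absorb. Second, the two-sided bound \eqref{E:T} on the breaking time does not follow from the Riccati comparison alone: one must know that the infimum of the slope is tracked along a controlled family of characteristics, which is the content of the nesting property $\Sigma(t_2)\subset\Sigma(t_1)$ for $\Sigma(t)=\{x:v_1(t;x)\le(1-\epsilon)m(t)\}$ (Lemma~\ref{lem:S}) together with the monotonicity of $q$ (Lemma~\ref{lem:q}); your proposal has no substitute for this step. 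Finally, a small correction: local well-posedness here is obtained by energy methods for the dispersive equation in $H^{3/2+}$, not by parabolic regularization, and the $\|\phi\|_{H^3}$ term in \eqref{A2:m1} is used to verify the bootstrap hypothesis $|K_1(0;x)|<\epsilon^2m^2(0)$ at time zero via Sobolev embedding, not to control the equation for $u$ itself.
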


Furthermore we show wave breaking in the range $0<\alpha<1/3$
for a broad class of initial data with sufficiently large steepness.

\begin{theorem}[Wave breaking for $0<\alpha<1/3$] \label{thm:1/3}
Let ${\displaystyle 0<\alpha<\frac{(1-\epsilon)^2}{5-2(1-\epsilon)^2}}$ 
for $\epsilon>~0$ sufficiently small. Assume that $\phi\in H^\infty(\mathbb{R})$ satisfies
\begin{equation}\label{A3:m1}
\epsilon^2(\inf_{x\in\mathbb{R}}\phi'(x))^2>\|\phi\|_{H^2}+\frac{1}{\alpha}(6C_1+C_2),
\end{equation}
and 
\begin{equation}\label{A3:m2}
-\inf_{x\in\mathbb{R}}\phi'(x)>\frac{4}{\alpha(1-\alpha)}\Big(3+\frac{1}{1-\alpha}\frac{C_1}{C_0}\Big)
+\frac{2}{\alpha}\Big(6+\frac{C_2}{C_1}\Big),
\end{equation}
where
\begin{equation}\label{I:C012}
\|\phi\|_{L^\infty}+\|\phi'\|_{L^\infty}<\frac12C_0, \quad \|\phi'\|_{L^\infty}<\frac12C_1,
\quad\|\phi''\|_{L^2}<\sqrt{\frac{1-2\alpha}{2}}C_2.
\end{equation}
Then the solution of the initial value problem associated with \eqref{E:main} and $u(x,0)=~\phi(x)$ 
exhibits wave breaking.
\end{theorem}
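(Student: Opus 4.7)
The plan is to mirror the proof of Theorem~\ref{thm:1/2}. Let $X_*$ denote the characteristic $\dot X_*=u(X_*,t)$ emanating from an initial minimizer of $\phi'$; differentiating \eqref{E:main} in $x$ and restricting along the flow gives
\[
\frac{d}{dt}\,v(X_*(t),t) = -v(X_*(t),t)^2 - \mathcal{H}\Lambda^\alpha v(X_*(t),t),\qquad v:=\partial_x u.
\]
The goal is to extract finite-time blow-up of $w(t):=v(X_*(t),t)$ from this Riccati-type equation as soon as $|\mathcal{H}\Lambda^\alpha v|$ can be uniformly dominated by $w(0)^2=(\inf_x\phi')^2$. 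The narrower range $\alpha<1/3$, compared with $\alpha<1/2$ in Theorem~\ref{thm:1/2}, is precisely what allows replacing the Gevrey bound \eqref{I:Gevrey} by a mere $H^2$ bound on $u$.

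The central pointwise estimate starts from the rewriting
\[
\mathcal{H}\Lambda^\alpha v(x) = \int_0^\infty \frac{v(x+z)-v(x-z)}{z^{1+\alpha}}\,dz
\]
derived from \eqref{def:Lambda}, split at a scale $\delta>0$. Writing $v(x+z)-v(x-z)=\int_{x-z}^{x+z}u_{xx}(y)\,dy$ and applying Cauchy--Schwarz bounds the near field by $\sqrt{2z}\,\|u_{xx}\|_{L^2}$, which is integrable against $z^{-1-\alpha}$ exactly because $\alpha<1/2$; the normalization $\sqrt{(1-2\alpha)/2}$ on $C_2$ in \eqref{I:C012} is exactly the resulting constant. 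The far field is bounded by $2\|\partial_x u\|_{L^\infty}\,\delta^{-\alpha}/\alpha$. Balancing the two produces a bound on $\|\mathcal{H}\Lambda^\alpha v\|_{L^\infty}$ that is a linear combination of $C_1$ and $C_2$ with the $\alpha$-dependent coefficients appearing in \eqref{A3:m1}--\eqref{A3:m2}. A parallel near/far split controls $\|\mathcal{H}\Lambda^\alpha u\|_{L^\infty}$ by $C_0$ and $C_1$ alone. Inserting these bounds into the ODE for $w$ yields a Riccati inequality $\dot w\leq -w^2+K$ with $K$ explicit in $C_0,C_1,C_2,\alpha$, and \eqref{A3:m1} then guarantees $|w(0)|^2>K$ with enough margin that $w\to-\infty$ in finite time $T$ of order $-1/\inf_x\phi'$.

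A bootstrap on $[0,T)$ must maintain $\|u\|_{L^\infty}+\|\partial_x u\|_{L^\infty}<C_0$, $\sup_x \partial_x u<C_1$, and $\|u_{xx}\|_{L^2}<\sqrt{(1-2\alpha)/2}\,C_2$, the initial cushion being provided by \eqref{I:C012}. The first two close by maximum-principle arguments: $\dot u\circ X=-\mathcal{H}\Lambda^\alpha u$ propagates the $L^\infty$ bound linearly in time, while $\sup_x v$ obeys a Riccati inequality with bounded forcing that keeps it under $C_1$. The third uses the $H^2$ energy identity
\[
\tfrac12\frac{d}{dt}\|u_{xx}\|_{L^2}^2 = -\tfrac52\int u_x\,u_{xx}^2\,dx,
\]
obtained after integration by parts and the skew-adjointness of $\mathcal{H}\Lambda^\alpha$. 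This last step is the main obstacle: because $\inf_x\partial_x u\to-\infty$ along the characteristic, the straightforward estimate $-\int u_x u_{xx}^2\,dx\leq |\inf_x u_x|\,\|u_{xx}\|_{L^2}^2$ followed by Gronwall would force $\|u_{xx}\|_{L^2}^2$ to diverge as $t\to T$. Overcoming this requires exploiting the quantitative hypothesis \eqref{A3:m2} — tuned so that the blow-up time $T$ is short enough, and $|w(0)|$ large enough, that $\|u_{xx}\|_{L^2}$ stays under $\sqrt{(1-2\alpha)/2}\,C_2$ throughout $[0,T)$. The restriction $\alpha<(1-\epsilon)^2/(5-2(1-\epsilon)^2)\to 1/3$ is precisely the threshold at which this delicate balance still closes without invoking the higher regularity used in Theorem~\ref{thm:1/2}.
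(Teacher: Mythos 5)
Your outline gets the architecture right up to the decisive step --- characteristics, the Riccati equation for $v_1$ along the flow, the near/far splitting of the kernel with Cauchy--Schwarz bringing in $\|\partial_x^2u\|_{L^2}$ (hence $\alpha<1/2$ and the constant $\sqrt{(1-2\alpha)/2}$), and the energy identity $\frac{d}{dt}\|\partial_x^2u\|_{L^2}^2=-5\int\partial_xu\,(\partial_x^2u)^2\,dx$ --- but the way you propose to close the $\|\partial_x^2u\|_{L^2}$ estimate is both different from the paper's and, as stated, cannot work. You plan to bootstrap a \emph{uniform-in-time} bound $\|\partial_x^2u(t)\|_{L^2}<\sqrt{(1-2\alpha)/2}\,C_2$ on all of $[0,T)$, ``tuned'' via \eqref{A3:m2} so that blow-up occurs before $\|\partial_x^2u\|_{L^2}$ can grow. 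This is self-defeating: \eqref{E:main} conserves the $L^2$-norm, so a uniform bound on $\|\partial_x^2u\|_{L^2}$ would give a uniform $H^2(\mathbb{R})$ bound, hence by Sobolev embedding a uniform bound on $\|\partial_xu\|_{L^\infty}$, contradicting the wave breaking you are trying to prove. No choice of constants can rescue a bootstrap whose conclusion forbids the theorem's conclusion.

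What the paper does instead is let $\|\partial_x^2u(t)\|_{L^2}$ grow, but at a \emph{quantified rate}: Gronwall applied to $\frac{d}{dt}\|\partial_x^2u\|_{L^2}^2\leq-5m(0)q^{-1}(t)\|\partial_x^2u\|_{L^2}^2$, combined with the bound \eqref{I:s=1} on $\int_0^tq^{-1}(\tau)\,d\tau$ from Lemma~\ref{lem:qs}, yields $\|\partial_x^2u(t)\|_{L^2}\lesssim q^{-5/2(1-\epsilon)^2}(t)$ as in \eqref{I:uxx}, where $q(t)=m(0)/m(t)\to0$. This growth is then absorbed by choosing the splitting scale in \eqref{eq:K1} \emph{time-dependently}, $\delta(t)=q^{\sigma}(t)$ with $\sigma=5/(1-\epsilon)^2-2$, which gives $|K_1(t;x)|\leq\frac{1}{\alpha}(6C_1+C_2)\,q^{-1-\sigma\alpha}(t)$. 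The whole proof hinges on the exponent comparison $1+\sigma\alpha\leq 2$, i.e.\ $\sigma\alpha<1$, so that $|K_1|\lesssim q^{-2}(t)\sim m^2(t)$ and \eqref{A3:m1} closes the continuity argument $|K_1|<\epsilon^2m^2(t)$. The condition $\sigma\alpha<1$ is exactly $\alpha<(1-\epsilon)^2/(5-2(1-\epsilon)^2)$; this is where the threshold $1/3$ actually comes from, whereas your write-up attributes it to a balance (short blow-up time versus $H^2$ growth) that does not exist. A smaller related issue: your bootstrap quantities $\|\partial_xu\|_{L^\infty}<C_1$ and the ``Riccati inequality with constant forcing $K$'' are likewise time-independent, whereas the bounds that can actually be propagated are $\|v_1(t)\|_{L^\infty}<C_1q^{-1}(t)$ and $|K_1|\lesssim q^{-2}(t)$, both of which must be allowed to blow up at the same rates as $m(t)$ and $m^2(t)$ respectively.
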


The proofs follow those in \cite{NS94}, based upon 
ordinary differential equations with nonlocal forcing terms along characteristics (see \eqref{eqn:vn}).
The kernel associated with their integral representations is explicit, albeit singular (see \eqref{def:Kn}), 
and they are bounded by Sobolev norms of the solution.
We simplify some arguments in \cite{NS94} in the course of the proofs. 

\

Recent numerical studies in \cite{KS} (see also \cite{KZ}) suggest 
finite-time blowup for \eqref{E:main} for\footnote{
Note that \eqref{E:main} preserves the $L^2$-norm.
Note moreover that \eqref{E:main} remains invariant under 
\[
u(x,t)\mapsto \lambda^{\alpha-1}u(\lambda x, \lambda^\alpha t)
\]
for any $\lambda>0$, whence it is $\dot{H}^{3/2-\alpha}$-critical. 
}
$0\leq\alpha\leq3/2$ (and global regularity for $\alpha>3/2$),
but the blowup scenario be different from wave breaking for $1<\alpha\leq3/2$. 
In the case of $\alpha=0$, a H\"older-norm blowup was shown in \cite{CCG}, and 
in the case of $\alpha=1$, incidentally, \eqref{E:main} becomes the Burgers equation, 
which exhibits wave breaking.
It will be interesting to analytically confirm blowup in the range $0\leq\alpha\leq3/2$. 
Moreover it will be interesting to confirm wave breaking in the range $0\leq\alpha\leq1$. 

\section{Proof of Theorem \ref{thm:1/2}}\label{sec:1/2}

We assume that the initial value problem associated with \eqref{E:main} and $u(x,0)=\phi(x)$ 
possesses a unique solution in the class $C^\infty([0,T); H^\infty(\mathbb{R}))$ for some $T>0$. 
Using a priori energy bounds and the method of successive approximations, indeed, 
one may establish local in time well-posedness for \eqref{E:main} in $H^{3/2+}(\mathbb{R})$. 
We assume that $T$ is the maximal time of existence.

\

For $x\in\mathbb{R}$ let $X(t;x)$ solve
\[
\frac{dX}{dt}(t;x)=u(X(t;x),t), \qquad X(0;x)=x.
\]
Since $u(x,t)$ is bounded and satisfies a Lipschitz condition in $x$ 
for all $x\in\mathbb{R}$ for all $t\in[0,T)$, it follows from the ODE theory 
that $X(\cdot\,;x)$ exists throughout the interval $[0,T)$ for all $x\in\mathbb{R}$. 
Furthermore $x\mapsto X(\cdot\,;x)$ is continuously differentiable 
throughout the interval $(0,T)$ for all $x\in\mathbb{R}$.

Let 
\begin{equation}\label{def:vn}
v_n(t;x)=(\partial_x^nu)(X(t;x),t) \qquad \text{for $n=0,1,2,\dots$}
\end{equation}
and let
\begin{equation}\label{def:m}
m(t)=\inf_{x\in\mathbb{R}}v_1(t;x)=\inf_{x\in\mathbb{R}}(\partial_xu)(x,t)=:m(0)q^{-1}(t).
\end{equation}
Clearly 
\[
m(t)<0\text{ for all $t\in[0,T)$},\quad q(0)=1\quad\text{and}\quad q(t)>0\text{ for all $t\in[0,T)$.}
\] 
Indeed $m(t)\geq 0$ would imply that $u(\cdot,t)$ be non-decreasing in $\mathbb{R}$, 
and hence~$u(\cdot,t)\equiv~0$. 

\

For $x\in\mathbb{R}$, differentiating \eqref{E:main} with respect to $x$ and evaluating at $x=X(t;x)$, 
we arrive at that 
\begin{align}
\frac{dv_n}{dt}+\sum_{j=1}^n\left(\begin{matrix}n\\j\end{matrix}\right)v_jv_{n+1-j}+&K_n(t;x)=0
\qquad \text{for $n=2,3,\dots$},\label{eqn:vn} \\
\frac{dv_1}{dt}+v_1^2+K_1(t;x)&=0 \label{eqn:v1} 
\intertext{and, similarly,}
\frac{dv_0}{dt}+K_0(t;x)=&0, \label{eqn:v0}
\end{align}
where $\left(\begin{matrix}n\\j\end{matrix}\right)$ means a binomial coefficient 
and (see~\eqref{def:Lambda})
\begin{align}\label{def:Kn}
K_n(t;x)=&(\mathcal{H}\Lambda^\alpha\partial_x^nu)(X(t;x),t) \notag\\
=&\int^\infty_{-\infty} \frac{\text{sgn}(X(t;x)-y)}{|X(t;x)-y|^{1+\alpha}}
((\partial_x^nu)(X(t;x),t)-(\partial_x^nu)(y,t))~dy
\end{align}
for $n=0,1,2,\dots$. 
Let $\delta>0$. We split the integral and perform an integration by parts to show that
\begin{align}\label{eq:Kn}
|K_n(t;x)|=&\Big|\Big(\int_{|y|<\delta}+\int_{|y|>\delta}\Big)
\frac{\text{sgn}(y)}{|y|^{1+\alpha}}((\partial_x^nu)(X(t;x),t)-(\partial_x^nu)(X(t;x)-y,t))~dy\Big|\notag \\
=&\Big|\frac{1}{\alpha}\delta^{-\alpha}
((\partial_x^nu)(X(t;x)-\delta,t)-(\partial_x^nu)(X(t;x)+\delta,t)) \notag \\
\qquad&+\frac{1}{\alpha}\int_{|y|<\delta}\frac{1}{|y|^\alpha} (\partial_x^{n+1}u)(X(t;x)-y,t)~dy \notag \\
\qquad&+\int_{|y|>\delta}\frac{\text{sgn}(y)}{|y|^{1+\alpha}}
((\partial_x^nu)(X(t;x),t)-(\partial_x^nu)(X(t;x)-y,t))~dy\Big|\notag \\
\leq &\frac{6}{\alpha}\delta^{-\alpha}\|v_n(t)\|_{L^\infty}
+\frac{2}{\alpha(1-\alpha)}\delta^{1-\alpha}\|v_{n+1}(t)\|_{L^\infty}
\end{align}
for $n=0,1,2,\dots$ and for all $t\in [0,T)$ for all $x\in\mathbb{R}$.
Indeed 
${\displaystyle \frac{\text{sgn}(y)}{|y|^{1+\alpha}}=-\frac{1}{\alpha}\Big(\frac{1}{|y|^{\alpha}}\Big)'}$. 
We pause to remark that the kernel associated with 
the integral representation of $\mathcal{H}\Lambda^\alpha$ in the range $0<\alpha<1$ 
is {\em not} in $L^1$ near zero whereas its anti-derivative is. 

\

We shall show that 
\begin{equation}\label{claim:K1}
|K_1(t;x)|<\epsilon^2m^2(t)\qquad \text{for all $t\in [0,T)$ for all $x\in \mathbb{R}$}.
\end{equation}
Note from \eqref{A2:m1} and the Sobolev inequality that 
\[
|K_1(0;x)|=|\mathcal{H}\Lambda^\alpha\phi'(x)|\leq \|\phi\|_{H^{\alpha+3/2+}}<\epsilon^2m^2(0)
\qquad\text{for all $x\in\mathbb{R}$}.
\]
Suppose on the contrary that $|K_1(T_1;x)|=\epsilon^2m^2(T_1)$ 
for some $T_1\in(0,T)$ for some $x\in\mathbb{R}$. 
By continuity, without loss of generality, we may assume that
\begin{equation}\label{I:K1}
|K_1(t;x)|\leq \epsilon^2m^2(t) \qquad \text{for all $t\in [0,T_1]$ for all $x\in \mathbb{R}$}.
\end{equation}

\begin{lemma}\label{lem:S}
For $t\in[0,T_1]$ let 
\[
\Sigma(t)=\{x\in\mathbb{R}:v_1(t;x)\leq (1-\epsilon)m(t)\}.
\]
Then $\Sigma(t_2)\subset\Sigma(t_1)$ whenever $0\leq t_1\leq t_2\leq T_1$.
\end{lemma}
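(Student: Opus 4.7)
The plan is to prove the contrapositive: if $v_1(t_1; x) > (1-\epsilon) m(t_1)$ for some $x \in \mathbb{R}$ and some $t_1 \in [0, T_1]$, then $v_1(t; x) > (1-\epsilon) m(t)$ throughout $[t_1, T_1]$. In other words, once a Lagrangian label escapes the near-minimum set, it cannot re-enter, which immediately yields $\Sigma(t_2) \subset \Sigma(t_1)$ for $t_1 \leq t_2$.

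Fix such an $x$ and set $F(t) := v_1(t; x) - (1-\epsilon) m(t)$. Suppose toward contradiction that $F$ vanishes somewhere on $(t_1, T_1]$ and let $t_*$ be the infimum of such times. Since $v_1(\cdot; x)$ is smooth and $m$ is continuous in $t$ (as the infimum of a family depending continuously on $t$ in $L^\infty$ norm, thanks to $u \in C^\infty([0,T); H^\infty(\mathbb{R}))$), one has $F(t_*) = 0$ and $F(t) > 0$ on $[t_1, t_*)$. Consequently the lower-left Dini derivative satisfies $D_- F(t_*) := \liminf_{h \to 0^+}(F(t_*) - F(t_*-h))/h \leq 0$.

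The contradiction will come from showing $D_- F(t_*) > 0$. From \eqref{eqn:v1} and $v_1(t_*; x) = (1-\epsilon) m(t_*)$, one has $\dot v_1(t_*; x) = -(1-\epsilon)^2 m(t_*)^2 - K_1(t_*; x)$. For $m$ itself, since $\partial_x u(\cdot, t)$ is smooth and decays at infinity, the infimum in \eqref{def:m} is attained at every $t$. An envelope-type argument, using $m(t_*) \leq v_1(t_*; x_h)$ together with $m(t_*-h) = v_1(t_*-h; x_h)$ for any minimizer $x_h$ at time $t_*-h$ and Taylor-expanding $v_1(t_*; x_h)$ around $t_*-h$, then yields
\[
D^- m(t_*) := \limsup_{h \to 0^+} \frac{m(t_*) - m(t_*-h)}{h} \leq -(1-\epsilon^2) m(t_*)^2,
\]
the $\epsilon^2$ slack absorbing $|K_1(t_*-h; x_h)|$ via \eqref{I:K1}. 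Bounding $|K_1(t_*; x)|$ by $\epsilon^2 m(t_*)^2$ in the same way and assembling,
\[
D_- F(t_*) = \dot v_1(t_*; x) - (1-\epsilon) D^- m(t_*) \geq \epsilon\big[(1-\epsilon)^2 - \epsilon\big] m(t_*)^2,
\]
which is strictly positive for $\epsilon > 0$ sufficiently small, the desired contradiction.

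The main obstacle is the lack of classical differentiability of $m(t)$: as a pointwise infimum it is only Lipschitz in general and can fail to be $C^1$ at times where the minimum is attained at more than one spatial point. The Dini derivative framework together with the one-sided envelope inequality above handles this cleanly, relying only on the infimum being attained, which is granted by the smoothness and spatial decay of $u$.
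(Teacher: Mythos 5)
Your argument is correct, and it takes a genuinely different route from the paper's. The paper argues by contradiction with a pair of times $t_1\leq t_2$ reduced to being close together, compares the given characteristic against a minimizing characteristic $x_2$ with $v_1(t_1;x_2)=m(t_1)$, and \emph{integrates} the Riccati inequalities $\frac{dv_1}{dt}\leq(-1\pm\frac{\epsilon}{2})v_1^2$ obtained from \eqref{eqn:v1} and \eqref{I:K1} to get explicit bounds on $v_1(t_2;x_1)$ and $m(t_2)$ that clash. You instead localize at the first crossing time $t_*$ of $F=v_1-(1-\epsilon)m$ and run the same comparison \emph{infinitesimally}: the Riccati equation gives $\dot v_1(t_*;x)\geq-[(1-\epsilon)^2+\epsilon^2]m^2(t_*)$, while the envelope (Danskin-type) inequality gives $D^-m(t_*)\leq-(1-\epsilon^2)m^2(t_*)$, and the mismatch $\epsilon[(1-\epsilon)^2-\epsilon]m^2(t_*)>0$ contradicts $D_-F(t_*)\leq0$. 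Both proofs exploit the same mechanism — near the minimum, $|K_1|\leq\epsilon^2m^2$ is a small perturbation of $\dot v_1=-v_1^2$ — and both need the infimum in \eqref{def:m} to be attained (which the decay of $\partial_xu(\cdot,t)$ and $m(t)<0$ guarantee). Your version buys a cleaner logical structure: it avoids the paper's slightly delicate reduction to ``$t_1$ and $t_2$ close,'' at the cost of handling the nonsmoothness of $m$ via Dini derivatives, which you do correctly (uniform boundedness of $\dot v_1$ on $[0,T_1]$ justifies the Taylor step, and the bound \eqref{I:K1} transfers to the minimizers $x_h$ since the flow map is a bijection). The paper's integrated formulation is not wasted, though: the explicit Riccati solution formula it sets up is reused verbatim in Lemma~\ref{lem:q} and the later estimates.
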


\begin{proof}
The proof may be found in \cite[Lemma~2.6.3]{NS94}. Here we include the detail for completeness.

Suppose on the contrary that $x_1\notin\Sigma(t_1)$ but $x_1\in\Sigma(t_2)$
for some $x_1\in\mathbb{R}$ for some $0\leq t_1\leq t_2\leq T_1$, i.e.
\begin{equation}\label{def:x1}
v_1(t_1;x_1)>(1-\epsilon)m(t_1)\quad\text{and}\quad
v_1(t_2;x_1)\leq(1-\epsilon)m(t_2)<\frac12m(t_2).
\end{equation}
We may choose $t_1$ and $t_2$ close so that 
\[
v_1(t;x_1)\leq\frac12m(t)\qquad\text{for all $t\in[t_1,t_2]$.}
\]
Indeed $v_1(\cdot\,;x_1)$ and $m$ are uniformly continuous throughout the interval $[0,T_1]$. Let 
\begin{equation}\label{def:x2}
v_1(t_1;x_2)=m(t_1)<\frac12m(t_1).
\end{equation}
We may necessarily choose $t_2$ close to $t_1$ so that 
\[
v_1(t;x_2)\leq\frac12m(t)\qquad \text{for all $t\in[t_1,t_2]$.}
\]
Consequently \eqref{I:K1} yields that 
\[
|K_1(t;x_j)|\leq \epsilon^2m^2(t)\leq 4\epsilon^2v_1^2(t;x_j)<\frac12\epsilon v_1^2(t;x_j)
\qquad \text{for all $t\in[t_1,t_2]$ and $j=1,2$.}
\]
To proceed, we use \eqref{eqn:v1} to show that 
\[
\frac{dv_1}{dt}(\cdot\,;x_1)=-v_1^2(\cdot\,;x_1)-K_1(\cdot\,;x_1)
\geq\Big(-1-\frac{\epsilon}{2}\Big)v_1^2(\cdot\,;x_1)
\]
and, similarly,
\[
\frac{dv_1}{dt}(\cdot\,;x_2)\leq\Big(-1+\frac{\epsilon}{2}\Big)v_1^2(\cdot\,;x_2)
\]
throughout the interval $(t_1,t_2)$. It then follows after integration that
\[
\hspace*{-10pt}v_1(t_2;x_1)\geq\frac{v_1(t_1;x_1)}{1+(1+\frac{\epsilon}{2})v_1(t_1;x_1)(t_2-t_1)}
\quad\text{and}\quad
v_1(t_2;x_2)\leq\frac{v_1(t_1;x_2)}{1+(1-\frac{\epsilon}{2})v_1(t_1;x_2)(t_2-t_1)}.
\]
The latter inequality and \eqref{def:x2} imply that 
\[
m(t_2)\leq\frac{m(t_1)}{1+(1-\frac{\epsilon}{2})m(t_1)(t_2-t_1)}.
\]
The former inequality and \eqref{def:x1}, on the other hand, imply that
\begin{align*}
v_1(t_2;x_1)>&\frac{(1-\epsilon)m(t_1)}{1+(1+\frac{\epsilon}{2})(1-\epsilon)m(t_1)(t_2-t_1)} \\
>&\frac{(1-\epsilon)m(t_1)}{1+(1-\frac{\epsilon}{2})m(t_1)(t_2-t_1)} \\
\geq& (1-\epsilon)m(t_2).
\end{align*}
A contradiction therefore completes the proof.\end{proof}

\begin{lemma}\label{lem:q}
$0<q(t)\leq 1$ for all $t\in[0,T_1]$.
\end{lemma}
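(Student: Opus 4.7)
The plan is to show $q$ is non-increasing, which is equivalent to showing that $m$ is non-increasing on $[0,T_1]$. Once that is in hand, the two inequalities $0<q(t)\le 1$ follow immediately: positivity is already noted in the paragraph after \eqref{def:m}, and $m(t)\le m(0)<0$ gives $q(t)=m(0)/m(t)\in(0,1]$ upon dividing two negative numbers.

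To see that $m$ is non-increasing, the key point is to exploit the bound \eqref{I:K1} on $K_1$ at a point where $v_1(t;\cdot)$ attains its infimum. Since $u(\cdot,t)\in H^\infty(\mathbb{R})$, the function $\partial_x u(\cdot,t)$ is continuous and tends to zero at infinity, so the infimum is attained at some $x_\ast=x_\ast(t)\in\mathbb{R}$; and since $X(t;\cdot)$ is a homeomorphism, we may regard $x_\ast$ as a point in the parameter space on which $v_1$ is defined. At $x_\ast$ we have $v_1(t;x_\ast)=m(t)$, and therefore $v_1^2(t;x_\ast)=m^2(t)$. Invoking \eqref{eqn:v1} together with \eqref{I:K1},
\begin{equation*}
\frac{dv_1}{dt}(t;x_\ast)=-v_1^2(t;x_\ast)-K_1(t;x_\ast)\le -m^2(t)+\epsilon^2 m^2(t)=-(1-\epsilon^2)m^2(t)\le 0.
\end{equation*}

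The remaining step is to pass from the pointwise inequality along a minimizing characteristic to a bound on the derivative of the infimum $m(t)$. I would use the standard Dini-derivative argument: since
\begin{equation*}
m(t+h)\le v_1(t+h;x_\ast(t))=m(t)+h\,\frac{dv_1}{dt}(t;x_\ast(t))+o(h)\qquad\text{as $h\to 0^+$},
\end{equation*}
the upper-right Dini derivative satisfies $D^+m(t)\le -(1-\epsilon^2)m^2(t)\le 0$ for all $t\in[0,T_1]$. Because $m$ is continuous on $[0,T_1]$ (the family $\partial_x u(\cdot,t)$ depends continuously on $t$ and decays uniformly at infinity), this Dini bound implies that $m$ is non-increasing on $[0,T_1]$, which gives $m(t)\le m(0)$ and hence $q(t)\le 1$.

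The main technical point worth care is the passage from the pointwise ODE along the characteristic through $x_\ast(t)$ to the differential inequality for the infimum, since $x_\ast(t)$ need not be unique or smooth in $t$; the Dini derivative formulation above is the cleanest way to bypass this and is the only place where attainment of the infimum (ensured by $u\in H^\infty$) is essential.
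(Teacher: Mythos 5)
Your proof is correct, but it takes a genuinely different route from the paper's. The paper proves Lemma~\ref{lem:q} by fixing a label $x\in\Sigma(T_1)$ (which, by Lemma~\ref{lem:S}, satisfies $m(t)\leq v_1(t;x)\leq(1-\epsilon)m(t)$ for all $t\in[0,T_1]$), writing the Riccati equation \eqref{eqn:v1} in the explicit integrated form \eqref{def:r} to introduce $r(t)$, and deducing from $|v_1^{-2}K_1|<\epsilon$ the two-sided bound $(1+\epsilon)m(0)\leq dr/dt\leq(1-\epsilon)m(0)$; monotonicity of $r$, hence of $v_1$, $m$ and $q$, then gives $q\leq 1$, and the comparison $q(t)\leq r(t)\leq \frac{1}{1-\epsilon}q(t)$ falls out along the way. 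You instead work directly at the point where the infimum is attained (which you correctly justify via $\partial_xu(\cdot,t)\in H^\infty\subset C_0$ and $m(t)<0$) and run a Dini-derivative argument, $D^+m(t)\leq -(1-\epsilon^2)m^2(t)\leq 0$, to conclude that $m$ is non-increasing. Your route is more elementary and self-contained for the stated lemma, needs neither Lemma~\ref{lem:S} nor the auxiliary function $r$, and in fact makes rigorous the step ``$v_1(\cdot\,;x)$ decreasing for $x\in\Sigma(T_1)$ implies $m$ decreasing,'' which the paper passes over somewhat quickly. What you lose is that the paper's proof is really a device for producing \eqref{I:dr/dt} and \eqref{I:r}, which are the workhorses of Lemma~\ref{lem:qs}, of the estimate \eqref{I:s>1'}, and of the final blowup-time bound \eqref{E:T}; if one adopted your argument, the $r$-construction and the comparison with $q$ would still have to be carried out separately before the rest of Section~\ref{sec:1/2} can proceed.
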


\begin{proof}
Let $x\in\Sigma(T_1)$ and we shall suppress it to simplify the exposition.
Note from Lemma~\ref{lem:S} that 
\begin{equation}\label{I:mv}
m(t)\leq v_1(t)\leq (1-\epsilon)m(t)\qquad \text{for all $t\in[0,T_1]$}.
\end{equation}
Let's write the solution of \eqref{eqn:v1} as
\begin{equation}\label{def:r}
v_1(t)=\frac{v_1(0)}{1+v_1(0)\int^t_0(1+(v_1^{-2}K_1)(\tau))~d\tau}=:m(0)r^{-1}(t).
\end{equation}
Clearly $r(t)>0$ for all $t\in[0,T_1]$. Since 
\[
|(v_1^{-2}K_1)(t)|<(1-\epsilon)^{-2}\epsilon^2<\epsilon\qquad\text{for all $t\in[0,T_1]$}
\]
by \eqref{I:mv} and \eqref{I:K1}, we use \eqref{def:r} to show that 
\begin{equation}\label{I:dr/dt}
(1+\epsilon)m(0)\leq \frac{dr}{dt}\leq (1-\epsilon)m(0)\qquad\text{}.
\end{equation}
throughout the interval $(0,T_1)$.
Consequently $r(t)$, and hence (see \eqref{def:r}) $v_1(t)$, are decreasing for all $t\in[0,T_1]$. 
Furthermore (see \eqref{def:m}), $m(t)$, and hence $q(t)$, are decreasing for all $t\in[0,T_1]$.
This completes the proof. Incidentally note from \eqref{def:m}, \eqref{def:r}, and \eqref{I:mv} that
\begin{equation}\label{I:r}
q(t)\leq r(t)\leq \frac{1}{1-\epsilon}q(t)\qquad\text{for all $t\in[0,T_1]$}.
\end{equation} 
\end{proof}

\begin{lemma}\label{lem:qs}
In case $s>0$, $s\neq1$,
\begin{align}
\int^t_0q^{-s}(\tau)~d\tau\leq&
-\frac{1}{(1-\epsilon)^{s+1}}\frac{1}{m(0)}\frac{1}{1-s}((1-\epsilon)^{s-1}-q^{1-s}(t)),
\label{I:s>1}
\intertext{and}
\int^t_0q^{-1}(\tau)~d\tau\leq& -\frac{1}{(1-\epsilon)^2}\frac{1}{m(0)}
\Big(\log\frac{1}{1-\epsilon}-\log q(t)\Big)\label{I:s=1}
\end{align}
for all $t\in[0,T_1]$.
\end{lemma}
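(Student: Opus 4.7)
Fix $x\in\Sigma(T_1)$ and suppress it as in Lemma~\ref{lem:q}. The plan is to pass from $q$ to $r$ via \eqref{I:r}, change variables from $\tau$ to $r$ in the resulting integral, and then translate back into a bound involving $q(t)$.

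From \eqref{I:r} one has $q^{-s}(\tau)\le(1-\epsilon)^{-s}r^{-s}(\tau)$, hence
\[
\int_0^t q^{-s}(\tau)\,d\tau\le\frac{1}{(1-\epsilon)^s}\int_0^t r^{-s}(\tau)\,d\tau.
\]
By \eqref{I:dr/dt}, $r$ is strictly decreasing with $|dr/d\tau|\ge-(1-\epsilon)m(0)>0$, so the substitution $\tau\mapsto r$ gives
\[
\int_0^t r^{-s}(\tau)\,d\tau\le-\frac{1}{(1-\epsilon)m(0)}\int_{r(t)}^{r(0)}\rho^{-s}\,d\rho,
\]
which evaluates to $(r(0)^{1-s}-r(t)^{1-s})/(1-s)$ when $s\neq 1$ and to $\log r(0)-\log r(t)$ when $s=1$.

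It remains to replace the $r$-endpoints by the $q$-endpoints claimed in the lemma. Since $x\in\Sigma(T_1)\subset\Sigma(0)$, the definition of $r$ in \eqref{def:r} together with $m(0)\le\phi'(x)\le(1-\epsilon)m(0)$ forces $r(0)=m(0)/\phi'(x)\in[1,1/(1-\epsilon)]$, while \eqref{I:r} gives $r(t)\ge q(t)$. In either regime $s>1$ or $0<s<1$ the monotonicity of $\rho\mapsto\rho^{1-s}$ reverses direction, but in both cases these two endpoint inequalities combine to yield
\[
\frac{r(0)^{1-s}-r(t)^{1-s}}{1-s}\le\frac{(1-\epsilon)^{s-1}-q^{1-s}(t)}{1-s}.
\]
Multiplying by the positive prefactor $-1/((1-\epsilon)^{s+1}m(0))$ then gives \eqref{I:s>1}, and the $s=1$ estimate \eqref{I:s=1} follows analogously from $\log r(0)\le\log(1/(1-\epsilon))$ and $\log r(t)\ge\log q(t)$.

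The only real obstacle is sign-bookkeeping: $m(0)<0$ and $1-s$ changes sign at $s=1$, so each endpoint bound must be applied with the correct orientation in each subrange of $s$. Once this is tracked carefully, no further estimate is required; the lemma is entirely a matter of comparing $r$ and $q$ and carrying out one explicit change of variables.
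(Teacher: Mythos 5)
Your proof is correct and follows essentially the same route as the paper: pass from $q$ to $r$ via \eqref{I:r}, use the lower bound $|dr/d\tau|\geq -(1-\epsilon)m(0)$ from \eqref{I:dr/dt} to integrate explicitly in $r$ (the paper inserts the factor $\tfrac{1}{(1-\epsilon)m(0)}\tfrac{dr}{d\tau}\geq 1$ into the integrand rather than changing variables, which is the same computation), and then convert the endpoints back to $q$ using $r(t)\geq q(t)$ and $r(0)\leq 1/(1-\epsilon)$. Your sign-tracking in the two regimes $s>1$ and $0<s<1$ checks out, so nothing further is needed.
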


\begin{proof}
In case $s>0$, $s\neq 1$, we use \eqref{I:r} and \eqref{I:dr/dt} to show that 
\begin{align*}
\int^t_0 q^{-s}(\tau)~d\tau\leq& \frac{1}{(1-\epsilon)^s}\int^t_0r^{-s}(\tau)~d\tau \\
\leq &\frac{1}{(1-\epsilon)^{s+1}}\frac{1}{m(0)}\int^t_0r^{-s}(\tau)\frac{dr}{d\tau}(\tau)~d\tau \\
=&\frac{1}{(1-\epsilon)^{s+1}}\frac{1}{m(0)}\frac{1}{1-s}(r^{1-s}(t)-r^{1-s}(0)).
\end{align*}
Therefore \eqref{I:s>1} follows from \eqref{I:r}. Similarly
\[
\int^t_0q^{-1}(\tau)~d\tau\leq \frac{1}{(1-\epsilon)^2}\frac{1}{m(0)}(\log r(t)-\log r(0)),
\]
and \eqref{I:s=1} follows from \eqref{I:r}.
\end{proof}

We claim that
\begin{align}
\|v_0(t)\|_{L^\infty}&=\|u(t)\|_{L^\infty}<C_0, \label{claim:v0} \\
\|v_1(t)\|_{L^\infty}&=\|\partial_xu (t)\|_{L^\infty}<C_1q^{-1}(t), \label{claim:v1}  \\
\|v_n(t)\|_{L^\infty}&=\|\partial_x^nu (t)\|_{L^\infty}<
C_2(n-1)^{(n-1)/\alpha}q^{-1-(n-1)\sigma}(t),\label{claim:vn}
\end{align}
$n=2,3,\dots$, for all $t\in[0,T_1]$,
where $C_0, C_1, C_2$ satisfy \eqref{I:C0}, \eqref{I:C1}, \eqref{I:C2} and
\begin{equation}\label{I:sigma}
\sigma>3\frac{(1+\epsilon)^3}{(1-\epsilon)^2}-1
\end{equation}
so that $\sigma\alpha<1$ (see Theorem~\ref{thm:1/2}).
Note from \eqref{I:C0}, \eqref{I:C1}, \eqref{I:Gevrey} and \eqref{def:m} that
\begin{align*}
\|v_0(0)\|_{L^\infty}=&\|\phi\|_{L^\infty}<C_0,\\
\|v_1(0)\|_{L^\infty}=&\|\phi'\|_{L^\infty}<C_1=C_1q^{-1}(0), \\
\|v_n(0)\|_{L^\infty}=&\|\phi^{(n)}\|_{L^\infty}<C_2(n-1)^{(n-1)/\alpha}q^{-1-(n-1)\sigma}(0),
\end{align*}
$n=2,3,\dots$. 
Suppose on the contrary that \eqref{claim:v0}, \eqref{claim:v1} and \eqref{claim:vn} hold
for all $n=0,1,2,\dots$ throughout the interval $[0,T_2)$ 
but it fail for some $n\geq0$ at $t=T_2$ for some $T_2 \in (0,T_1]$. 
By continuity, 
\begin{align}
\|v_0(t)\|_{L^\infty}\leq &C_0, \label{I:v0}\\ 
\|v_1(t)\|_{L^\infty}\leq &C_1q^{-1}(t), \label{I:v1}\\
\|v_n(t)\|_{L^\infty}\leq &C_2(n-1)^{(n-1)/\alpha}q^{-1-(n-1)\sigma}(t), \label{I:vn}
\end{align}
$n=2,3,\dots$, for all $t\in[0,T_2]$.

\

For $n=0$, we use \eqref{eq:Kn}, where $\delta(t)=q(t)$, and \eqref{I:v0}, \eqref{I:v1} to show that 
\[
|K_0(t;x)|\leq \frac{6}{\alpha}C_0q^{-\alpha}(t)+\frac{2}{\alpha(1-\alpha)}C_1q^{1-\alpha}(t)q^{-1}(t)
=\frac{2}{\alpha}\Big(3C_0+\frac{C_1}{1-\alpha}\Big)q^{-\alpha}(t)
\]
for all $t\in[0,T_2]$ for all $x\in\mathbb{R}$. 
We then integrate \eqref{eqn:v0} over the interval $[0,T_2]$ to arrive at that
\begin{align*}
|v_0(T_2;x)|\leq& \|\phi\|_{L^\infty}+\int^{T_2}_0|K_0(t;x)|~dt \\
<&(1-\epsilon)C_0+\frac{2}{\alpha}\Big(3C_0+\frac{C_1}{1-\alpha}\Big)\int^{T_2}_0q^{-\alpha}(t)~dt \\
<&(1-\epsilon)C_0-\frac{2}{\alpha(1-\alpha)}\Big(3C_0+\frac{C_1}{1-\alpha}\Big)
\frac{1}{(1-\epsilon)^{\alpha+1}}\frac{1}{m(0)}((1-\epsilon)^{\alpha-1}-q^{1-\alpha}(T_2)) \\
<&(1-\epsilon)C_0-\frac{2}{\alpha(1-\alpha)}\Big(3C_0+\frac{C_1}{1-\alpha}\Big)
\frac{1}{(1-\epsilon)^2}\frac{1}{m(0)}\\<&C_0
\end{align*}
for all $x\in\mathbb{R}$.
Therefore \eqref{claim:v0} holds throughout $[0,T_2]$.
Here, the second inequality uses \eqref{I:C0}, the third inequality uses \eqref{I:s>1},
the fourth inequality uses Lemma~\ref{lem:q}, and the last inequality uses \eqref{A2:m2}.

\

For $n=1$, similarly, we use \eqref{eq:Kn}, where $\delta(t)=q^\sigma(t)$, 
and \eqref{I:v1}, \eqref{I:vn} to show that
\begin{align}
|K_1(t;x)|\leq& \frac{6}{\alpha}C_1q^{-1}q^{-\sigma\alpha}(t)
+\frac{2}{\alpha(1-\alpha)}C_2q^{\sigma-\sigma\alpha}(t)q^{-1-\sigma}(t) \notag \\
=&\frac{2}{\alpha}\Big(3C_1+\frac{C_2}{1-\alpha}\Big)q^{-1-\sigma\alpha}(t) 
<\frac{2}{\alpha}\Big(3C_1+\frac{C_2}{1-\alpha}\Big)q^{-2}(t) \,. \label{I:K1'}
\end{align}
for all $t\in[0,T_2]$ for all $x\in\mathbb{R}$. 
The last inequality uses Lemma~\ref{lem:q} and that $\sigma\alpha<1$.
Note from \eqref{eqn:v1} that 
\[
\frac{dv_1}{dt}=-v_1^2-K_1(t;x)\leq |K_1(t;x)|.
\]
In case $v_2(T_2;x)\geq 0$, we integrate it over the interval $[0,T_2]$ to arrive at that
\begin{align*}
v_1(T_2;x)\leq& \|\phi'\|_{L^\infty}+\int^{T_2}_0|K_1(t;x)|~dt \\
\leq&(1-\epsilon)C_1+\frac{2}{\alpha}\Big(3C_1+\frac{C_2}{1-\alpha}\Big)\int^{T_2}_0q^{-2}(t)~dt \\
\leq&(1-\epsilon)C_1q^{-1}(T_2)-\frac{2}{\alpha}\Big(3C_1+\frac{C_2}{1-\alpha}\Big)
\frac{1}{(1-\epsilon)^3}\frac{1}{m(0)}(q^{-1}(T_2)-(1-\epsilon)) \\
<&(1-\epsilon)C_1q^{-1}(T_2)
-\frac{2}{\alpha}\Big(3C_1+\frac{C_2}{1-\alpha}\Big)\frac{1}{(1-\epsilon)^3}\frac{1}{m(0)}q^{-1}(T_2)\\ 
<&C_1q^{-1}(T_2).
\end{align*}
The second inequality uses \eqref{I:C1} and \eqref{I:K1'}, 
the third inequality uses Lemma~\ref{lem:q} and \eqref{I:s>1}, 
and the last inequality uses \eqref{A2:m2}.
In case $v_1(T_2;x)<0$, on the other hand, we may assume, without loss of generality, that 
$\|\phi'\|_{L^\infty}=-m(0)$, and \eqref{def:m} and \eqref{I:C1} imply that 
\[
v_1 (T_2;x) \geq m(T_2)= m(0) q^{-1}(T_2)> -C_1 q^{-1}(T_2).
\]
Therefore \eqref{claim:v1} holds throughout the interval $[0,T_2]$.

\

To proceed, for $n\geq 2$, we use \eqref{eq:Kn}, where $\delta(t)=n^{-1/\alpha}q^\sigma(t)$, 
and \eqref{I:vn} to show that
\begin{align}\label{I:K2}
|K_n(t;x)|\leq &\frac{6}{\alpha}C_2n(n-1)^{(n-1)/\alpha}q^{-\sigma\alpha}(t)q^{-1-(n-1)\sigma}(t)\notag\\
&+\frac{2}{\alpha(1-\alpha)}C_2n^{1-1/\alpha}n^{n/\alpha}
q^{\sigma-\sigma\alpha}q^{-1-n\sigma}(t) \notag \\
\leq&\max\Big(\frac{6}{\alpha}, \frac{2}{\alpha(1-\alpha)}\Big)
C_2n(n-1)^{(n-1)/\alpha}\Big(1+\Big(\frac{n}{n-1}\Big)^{(n-1)/\alpha}\Big)
q^{-1-\sigma\alpha-(n-1)\sigma}(t)\notag \hspace*{-.5in} \\
<&\frac{6}{\alpha}C_2n(n-1)^{(n-1)/\alpha}(1+e^{1/\alpha})q^{-1-\sigma\alpha-(n-1)\sigma}(t)
\end{align}
for all $t\in[0,T_2]$ for all $x\in\mathbb{R}$. 

\

For $n\geq 2$, furthermore, let $v_1(T_3;x)=m(T_3)$ and 
\[
v_1(t;x)\leq \frac{1}{(1+\epsilon)^{1/(2+(n-1)\sigma)}}m(t)\qquad\text{for all $t\in[T_3,T_2]$}
\]
for some $T_3\in(0,T_2)$ and for some $x\in\mathbb{R}$. 
Indeed $v_1$ and $m$ are uniformly continuous throughout the interval $[0, T_2]$.
We rerun the argument in the proof of Lemma~\ref{lem:q} to arrive at that
\[
(1+\epsilon)m(0)\leq\frac{dr}{dt}(t)\leq(1-\epsilon)m(0) \qquad \text{for all $t\in (T_3,T_2)$}
\]
and
\[
q(t)\leq r(t)\leq (1+\epsilon)^{1/(2+(n-1)\sigma)}q(t)\qquad\text{for all $t\in[T_3,T_2]$.}
\]
Moreover we rerun the argument in the proof of Lemma~\ref{lem:qs} to arrive at that
\begin{align}
\int^{T_2}_{T_3}&q^{-2-(n-1)\sigma}(t)~dt \notag\\ 
&\leq(1+\epsilon)\int^{T_2}_{T_3}r^{-2-(n-1)\sigma}(t)~dt \notag\\
&\leq\frac{1+\epsilon}{1-\epsilon}\frac{1}{m(0)}
\int^{T_2}_{T_3}r^{-2-(n-1)\sigma}(t)\frac{dr}{dt}(t)~dt \notag \\
&\leq-\frac{1+\epsilon}{1-\epsilon}\frac{1}{m(0)}\frac{1}{1+(n-1)\sigma}
(r^{-1-(n-1)\sigma}(T_2)-r^{-1-(n-1)\sigma}(T_3)) \notag \\
&\leq-\frac{1+\epsilon}{1-\epsilon}\frac{1}{m(0)}\frac{1}{1+(n-1)\sigma}
(q^{-1-(n-1)\sigma}(T_2)-q^{-1-(n-1)\sigma}(T_3)).\label{I:s>1'}
\end{align}

\

For $n=2$, note from \eqref{eqn:vn} that 
\begin{align*}
\frac{dv_2}{dt}=&-3v_1v_2-K_2(t;x) \\ \leq& 3C_1C_2q^{-1}(t)q^{-1-\sigma}(t)
+\frac{12}{\alpha}(1+e^{1/\alpha})C_2q^{-1-\sigma\alpha-\sigma}(t) \\
\leq&3\Big(C_1+\frac{4}{\alpha}(1+e^{1/\alpha})\Big)C_2q^{-2-\sigma}(t)
\end{align*}
throughout the interval $(0,T_2)$. The first inequality uses \eqref{I:vn} and \eqref{I:K2}, 
and the second inequality uses Lemma~\ref{lem:q} and that $\sigma\alpha<1$.
Let $v_2(T_2;x_2)=\max_{x\in\mathbb{R}}|v_2(T_2;x)|$. 
We may choose $T_3$ close to $T_2$ so that 
$v_2(t;x_2)\geq 0$ for all $t\in[T_3,T_2]$. 
We may furthermore assume, without loss of generality, that $\|\phi'\|_{L^\infty}=-m(0)$. 
An integration then leads to that
\begin{align*}
v_2(T_2;x_2)\leq&v_2(T_3;x_2)
+3\Big(C_1+\frac{4}{\alpha}(1+e^{1/\alpha})\Big)C_2\int^{T_2}_{T_3}q^{-2-\sigma}(t)~dt\\
<&C_2q^{-1-\sigma}(T_3)\\
&-3\Big(C_1+\frac{4}{\alpha}(1+e^{1/\alpha})\Big)C_2
\frac{1}{1+\sigma}\frac{1+\epsilon}{1-\epsilon}\frac{1}{m(0)}
(q^{-1-\sigma}(T_2)-q^{-1-\sigma}(T_3)) \\
<&C_2q^{-1-\sigma}(T_3)\\&+\frac{3}{1+\sigma}\frac{1+\epsilon}{1-\epsilon}
\Big(\frac{1+\epsilon}{1-\epsilon}+\epsilon\frac{1+\epsilon}{1-\epsilon}\Big)C_2
(q^{-1-\sigma}(T_2)-q^{-1-\sigma}(T_3))  \\
=&\Big( 1- \frac{3}{1+\sigma}\frac{(1+\epsilon)^3}{(1-\epsilon)^2}\Big)C_2 q^{-1-\sigma}(T_3)
+ \frac{3}{1+\sigma}\frac{(1+\epsilon)^3}{(1-\epsilon)^2}C_2 q^{-1-\sigma} (T_2) \\
\leq & \Big( 1- \frac{3}{1+\sigma}\frac{(1+\epsilon)^3}{(1-\epsilon)^2}\Big)C_2 q^{-1-\sigma} (T_2)
+ \frac{3}{1+\sigma}\frac{(1+\epsilon)^3}{(1-\epsilon)^2}C_2 q^{-1-\sigma} (T_2) \\
=&C_2q^{-1-\sigma}(T_2).
\end{align*}
Therefore \eqref{claim:vn} holds for $n=2$ throughout the interval $[0,T_2]$.
Here, the second inequality uses \eqref{I:vn} and \eqref{I:s>1'}, 
the third inequality uses \eqref{I:C1} and \eqref{A2:m3}, 
and the last inequality uses~\eqref{I:sigma} and that $q(t)$ is decreasing for all $t\in[T_3,T_2]$
(see the proof of Lemma~\ref{lem:q}).

\begin{lemma}\label{lem:stirling}
For $n\geq 3$, 
\begin{equation}\label{I:stirling}
\sum_{j=2}^{n-1}\left(\begin{matrix}n\\j\end{matrix}\right)(j-1)^{(j-1)/\alpha}(n-j)^{(n-j)/\alpha}
\leq \frac{e}{1/\alpha-1}\Big(\frac32\Big)^{1/\alpha-1}n(n-1)^{(n-1)/\alpha}.
\end{equation}
\end{lemma}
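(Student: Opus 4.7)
The plan is to prove this estimate by combining two elementary bounds for $k^{k}(N-k)^{N-k}$. First I will perform the shift $k = j-1$ and set $N = n-1$; then $j$ running from $2$ to $n-1$ corresponds to $k$ running from $1$ to $N-1$, and the identity $\binom{N+1}{k+1} = \frac{N+1}{k+1}\binom{N}{k}$ extracts the factor $n = N+1$. The claim is equivalent to
\[
\sum_{k=1}^{N-1}\frac{\binom{N}{k}}{k+1}\bigl(k^{k}(N-k)^{N-k}\bigr)^{\beta} \leq \frac{e}{\gamma}\Bigl(\tfrac32\Bigr)^{\gamma} N^{N\beta},
\]
where $\beta := 1/\alpha > 1$ and $\gamma := \beta - 1 = 1/\alpha - 1 > 0$.

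The core idea is to split the exponent as $\beta = 1 + \gamma$ and apply different bounds to the two factors of $k^{k}(N-k)^{N-k}$. From the binomial theorem,
\[
\binom{N}{k}k^{k}(N-k)^{N-k} \leq \bigl(k+(N-k)\bigr)^{N} = N^{N},
\]
which pairs one copy of $k^{k}(N-k)^{N-k}$ with the binomial coefficient. For the remaining $\gamma$-th power, the elementary bounds $(k/N)^{k} \leq k/N$ (valid since $k/N \leq 1$ and $k \geq 1$) and $((N-k)/N)^{N-k} \leq 1$ give
\[
k^{k}(N-k)^{N-k} \leq kN^{N-1},
\]
so raising to the power $\gamma$ contributes $k^{\gamma}N^{(N-1)\gamma}$. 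Multiplying the two bounds together, each summand is controlled by $\frac{k^{\gamma}}{k+1}N^{N\beta - \gamma}$.

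To close the argument, I will estimate $\sum_{k=1}^{N-1} k^{\gamma}/(k+1) \leq \sum_{k=1}^{N-1} k^{\gamma - 1}$ by a routine integral comparison, treating $\gamma \geq 1$ (increasing integrand) and $0 < \gamma < 1$ (decreasing integrand) separately; in both cases one obtains $\sum k^{\gamma-1} \leq N^{\gamma}/\gamma$. Combined with $N^{N\beta - \gamma}$ this yields the bound $N^{N\beta}/\gamma$, and the trivial inequality $1 \leq e(3/2)^{\gamma}$ for $\gamma \geq 0$ produces the stated constant. The main subtlety lies in the apportionment of $\beta$ between the two inequalities: invoking the binomial-theorem bound exactly once is what forces the power of $N$ to land at precisely $N\beta$ while leaving the decaying factor $k^{\gamma}/(k+1)$ needed for a $1/\gamma$-type sum, since any other split would either leave an uncontrolled $\binom{N}{k}$ or oversupply powers of $N$.
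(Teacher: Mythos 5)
Your proof is correct, and every step checks out: the reindexing $k=j-1$, $N=n-1$ together with $\binom{N+1}{k+1}=\frac{N+1}{k+1}\binom{N}{k}$ correctly extracts the factor $n$; the binomial-theorem bound $\binom{N}{k}k^{k}(N-k)^{N-k}\leq N^{N}$ and the elementary bound $k^{k}(N-k)^{N-k}\leq kN^{N-1}$ (from $(k/N)^{k}\leq k/N$ and $((N-k)/N)^{N-k}\leq1$) are both valid for $1\leq k\leq N-1$; the exponent bookkeeping $N^{N}\cdot N^{(N-1)\gamma}=N^{N\beta-\gamma}$ is right; and the integral comparison $\sum_{k=1}^{N-1}k^{\gamma-1}\leq N^{\gamma}/\gamma$ holds in both regimes of $\gamma=1/\alpha-1>0$. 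Note, though, that the paper does not prove this lemma at all --- it cites \cite[Lemma~2.6.1]{NS94} and omits the detail. Judging from the lemma's name and the shape of the constant $\frac{e}{1/\alpha-1}(3/2)^{1/\alpha-1}$, the cited argument runs through Stirling-type estimates for the binomial coefficients and the powers $(j-1)^{(j-1)/\alpha}$. Your route is genuinely different and arguably preferable here: it is entirely self-contained, avoids Stirling's formula, and in fact delivers the stronger bound $\frac{1}{1/\alpha-1}\,n(n-1)^{(n-1)/\alpha}$, so the factor $e(3/2)^{1/\alpha-1}$ is absorbed only at the very last step via $1\leq e(3/2)^{\gamma}$. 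The one hypothesis you use implicitly is $\gamma>0$, i.e.\ $\alpha<1$, which is exactly the regime in which the lemma's right-hand side is meaningful and in which the paper applies it ($0<\alpha<1/2$), so nothing is lost.
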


The proof may be found in \cite[Lemma~2.6.1]{NS94}. Hence we omit the detail.


\

For $n\geq 3$, note from \eqref{eqn:vn} that 
\begin{align*}
\frac{dv_n}{dt}=&
-(n+1)v_1v_n-\sum_{j=2}^{n-1}\left(\begin{matrix}n\\j\end{matrix}\right)v_jv_{n+1-j}-K_n(t;x) \\
\leq &C_1C_2(n+1)(n-1)^{(n-1)/\alpha}q^{-1}(t)q^{-1-(n-1)\sigma}(t) \\
&+\sum_{j=2}^{n-1}C_2^2\left(\begin{matrix}n\\j\end{matrix}\right)
(j-1)^{(j-1)/\alpha}(n-j)^{(n-j)/\alpha}q^{-1-(j-1)\sigma}(t)q^{-1-(n-j)\sigma}(t)+K_n(t;x) \\
\leq&C_1C_2(n+1)(n-1)^{(n-1)/\alpha}q^{-2-(n-1)\sigma}(t)\\
&+C_2^2 \frac{e^{1/\alpha}}{1/\alpha-1}n(n-1)^{(n-1)/\alpha}q^{-2-(n-1)\sigma}(t)\\
&+\frac{6}{\alpha}(1+e^{1/\alpha})C_2n(n-1)^{(n-1)/\alpha}q^{-2-(n-1)\sigma}(t) \\
\leq &\Big(C_1(n+1)+\frac{e}{1/\alpha-1}\Big(\frac32\Big)^{1/\alpha-1}C_2n
+\frac{6}{\alpha}(1+e^{1/\alpha})n\Big)\\ 
&\hspace*{160pt} \times C_2(n-1)^{(n-1)/\alpha}q^{-2-(n-1)\sigma}(t)
\end{align*}
throughout the interval $(0,T_2)$. 
The first inequality uses \eqref{I:v1} and \eqref{I:vn}, 
and the second inequality uses \eqref{I:stirling} and \eqref{I:K2}. 
Let $v_n(T_2;x_n)=\max_{x\in\mathbb{R}}|v_n(T_2;x)|$. 
We may choose $T_3$ close to $T_2$ so that $v_n(t;x_n)\geq 0$ for all $t\in[T_3,T_2]$. 
An integration then leads to that
\begin{align*}
v_n(T_2;x_n)\leq &v_n(T_3;x_n)+\Big(C_1(n+1)
+\frac{e}{1/\alpha-1}\Big(\frac32\Big)^{1/\alpha-1}C_2n+\frac{6}{\alpha}(1+e^{1/\alpha})n\Big) \\
&\hspace{110pt}\times C_2(n-1)^{(n-1)/\alpha}\int^{T_2}_{T_3}q^{-2-(n-1)\sigma}(t)~dt \\
<&C_2(n-1)^{(n-1)/\alpha}q^{-1-(n-1)\sigma}(T_3) \\ &-\Big(C_1(n+1)
+\frac{e}{1/\alpha-1}\Big(\frac32\Big)^{1/\alpha-1}C_2n+\frac{6}{\alpha}(1+e^{1/\alpha})n\Big)\\
&\hspace*{70pt}\times\frac{1}{1+(n-1)\sigma}\frac{1+\epsilon}{1-\epsilon}\frac{1}{m(0)}C_2(n-1)^{(n-1)/\alpha}\\ 
&\hspace*{100pt}\times (q^{-1-(n-1)\sigma}(T_2)-q^{-1-(n-1)\sigma}(T_3)) \\
<&C_2(n-1)^{(n-1)/\alpha}q^{-1-(n-1)\sigma}(T_3) \\
&+\frac{1}{2\sigma+1}\frac{1+\epsilon}{1-\epsilon}\Big(4\frac{1+\epsilon}{1-\epsilon}
+\frac{1+\epsilon}{1-\epsilon}+\epsilon\frac{1+\epsilon}{1-\epsilon}\Big)C_2(n-1)^{(n-1)/\alpha} \\
&\hspace*{100pt}\times (q^{-1-(n-1)\sigma}(T_2)-q^{-1-(n-1)\sigma}(T_3)) \\
=&\Big(1-\frac{1}{2\sigma+1}\frac{(5+\epsilon)(1+\epsilon)^2}{(1-\epsilon)^2}\Big)
C_2(n-1)^{(n-1)/\alpha}q^{-1-(n-1)\sigma}(T_3) \\
&+\frac{1}{2\sigma+1}\frac{(5+\epsilon)(1+\epsilon)^2}{(1-\epsilon)^2}
C_2(n-1)^{(n-1)/\alpha}q^{-1-(n-1)\sigma}(T_2) \\
\leq &C_2(n-1)^{(n-1)/\alpha}q^{-1-(n-1)\sigma}(T_2).
\end{align*}
Therefore \eqref{claim:vn} holds for $n\geq 3$ throughout the interval $[0,T_2]$. 
Here, the second inequality uses \eqref{I:vn} and \eqref{I:s>1'}, 
the third inequality uses \eqref{I:C2}, \eqref{A2:m3} 
and that $n\geq 3$, and the last inequality uses \eqref{I:sigma} and 
that $q(t)$ is decreasing for all $t\in[T_3,T_2]$. 

\

To summarize, a contradiction proves that \eqref{claim:v0}, \eqref{claim:v1} and \eqref{claim:vn} hold
for all $n=0,1,2,\dots$ throughout the interval $[0,T_1]$. 

\

To proceed, note from \eqref{I:K1'}, \eqref{def:m} and \eqref{A2:m1} that 
\[
|K_1(t;x)|\leq \frac{2}{\alpha}\Big(3C_1+\frac{C_2}{1-\alpha}\Big)m^{-2}(0)m^2(t)<\epsilon^2m^2(t)
\]
for all $t\in [0,T_1]$ for all $x\in\mathbb{R}$.
A contradiction therefore proves \eqref{claim:K1}.
We remark that \eqref{claim:v0}, \eqref{claim:v1}, \eqref{claim:vn} hold for all $n=0,1,2,\dots$ 
throughout the interval $[0, T']$ for all $T'<T$.

\

To conclude, for $t\in [0,T)$ let $x \in \Sigma(t)$. We use \eqref{def:r} and \eqref{I:dr/dt} to show that 
\[
m(0)(v_1^{-1}(0;x)+(1+\epsilon) t) \leq r(t;x)\leq m(0)(v_1^{-1}(0;x)+(1-\epsilon) t).
\]
Lemma~\ref{lem:S} moreover implies that $m(0)<v_1(0;x)\leq (1-\epsilon)m(0)$. 
Consequently
\[
1+m(0)(1+\epsilon)t \leq r(t) \leq \frac{1}{1-\epsilon} + m(0) (1-\epsilon)t
\]
Furthermore \eqref{I:r} implies that 
\[
(1-\epsilon) + m(0)(1-\epsilon^2)t \leq q(t) \leq \frac{1}{1-\epsilon} + m(0) (1-\epsilon)t.
\]
Since the function on the left side decreases to zero 
as ${\displaystyle t\to -\frac{1}{m(0)}\frac{1}{1+\epsilon}}$ and
since the function on the right side decreases to zero 
as ${\displaystyle t\to -\frac{1}{m(0)}\frac{1}{(1-\epsilon)^2}}$,
therefore, $q(t)\to 0$, and hence (see \eqref{def:m}) $m(t)\to-\infty$ as $t\to T-$, 
where $T$ satisfies \eqref{E:T}.
Note on the other hand that \eqref{claim:v0} dictates that $v_0(t;x)$ remains bounded 
for all $t\in [0,T']$, $T'<T$, for all $x\in \mathbb{R}$.
To summarize, $\inf_{x\in\mathbb{R}}\partial_xu(x,t)\to -\infty$ as $t\to T-$
but $u(x,t)$ is bounded for all $x\in\mathbb{R}$ for all $t\in [0,T)$, namely wave breaking.
This completes the proof.

\

\section{Proof of Theorem \ref{thm:1/3}}\label{sec:1/3}

Recall the notation of the previous section. We are done if 
\[
|K_1(t;x)|<\epsilon^2m^2(t) \qquad\text{for all $t\in[0,T)$ for all $x\in\mathbb{R}$}.
\]
Note from \eqref{A3:m1} and the Sobolev inequality that 
\[
|K_1(0;x)|=|\mathcal{H}\Lambda^\alpha\phi'(x)|\leq \|\phi\|_{H^{3/2+\alpha+}}<\epsilon^2m^2(0)
\qquad \text{for all $x\in\mathbb{R}$}.
\]
Suppose on the contrary that $|K_1(T_1;x)|=\epsilon^2m^2(T_1)$ 
for some $T_1\in(0,T)$ for some $x\in\mathbb{R}$. 
By continuity, without loss of generality, we may assume \eqref{I:K1}. 
We then rerun the argument in the previous section to arrive at that 
Lemma~\ref{lem:S}, Lemma~\ref{lem:q} and Lemma~\ref{lem:qs} hold. 

\

We claim that
\begin{align}
\|v_0(t)\|_{L^\infty}=&\|u(t)\|_{L^\infty}<C_0,\label{claim3:v0}\\
\|v_1(t)\|_{L^\infty}=&\|\partial_xu(t)\|_{L^\infty}<C_1q^{-1}(t)\label{claim3:v1}
\end{align}
for all $t\in[0,T_1]$, where $C_0$ and $C_1$ satisfy \eqref{I:C012}.
Note from \eqref{I:C012} and Lemma~\ref{lem:q} that 
\begin{align*}
\|v_0(0)\|_{L^\infty}=&\|\phi\|_{L^\infty}<C_0,\\
\|v_1(0)\|_{L^\infty}=&\|\phi'\|_{L^\infty}<C_1=C_1q^{-1}(0).
\end{align*}
Suppose on the contrary that either \eqref{claim3:v0} or \eqref{claim3:v1} fails 
at $T_2$ for some $T_2\in(0,T_1)$. 
By continuity, without loss of generality, we may assume that 
\begin{align}
\|v_0(t)\|_{L^\infty}\leq& C_0,\notag\\
\|v_1(t)\|_{L^\infty}\leq& C_1q^{-1}(t)\label{I3:v1}
\end{align}
for all $t\in[0,T_2]$.

\

For $n=0$, we rerun the argument in the previous section and we use \eqref{A3:m1} to deduce that 
\eqref{claim3:v0} holds throughout the interval $[0,T_2]$.

\

For $n=1$, we follow the proof of \eqref{eq:Kn} but use H\"older's inequality to show that
\begin{align}\label{eq:K1}
|K_1(t;x)|
=&\Big|\frac{1}{\alpha}\delta^{-\alpha}((\partial_xu)(X(t;x)-\delta,t)-(\partial_xu)(X(t;x)+\delta,t)) \notag \\
\qquad&+\frac{1}{\alpha}\int_{|y|<\delta}\frac{1}{|y|^\alpha}(\partial_x^2u)(X(t;x)-y,t)~dy \notag \\
\qquad&+\int_{|y|>\delta}\frac{\text{sgn}(y)}{|y|^{1+\alpha}}
((\partial_xu)(X(t;x),t)-(\partial_xu)(X(t;x)-y,t))~dy\Big|\notag \\
\leq &\frac{6}{\alpha}\delta^{-\alpha}\|v_1(t)\|_{L^\infty}
+\frac{1}{\alpha}\sqrt{\frac{2}{1-2\alpha}}\delta^{1/2-\alpha}\|v_2(t)\|_{L^2}
\end{align}
for all $t\in[0,T_2]$ for all $x\in\mathbb{R}$.

Differentiating \eqref{E:main} twice with respect to $x$ 
and integrating over $\mathbb{R}$ against $\partial_x^2u$, we promptly arrive at that 
\[
\frac12\frac{d}{dt}\int^\infty_{-\infty}(\partial_x^2u)^2~dx
+\int^\infty_{-\infty} \partial_x^2u\mathcal{H}\Lambda^\alpha \partial_x^2u~dx
+\int^\infty_{-\infty} (u(\partial_x^2u)(\partial_x^3u)+3(\partial_xu)(\partial_x^2u)^2)~dx=0.
\]
The second term on the left side vanishes by a symmetry argument
while an integration by parts leads to that
\[
\int^\infty_{-\infty} u(\partial_x^2u)(\partial_x^3u)~dx
=-\frac12\int^\infty_{-\infty}(\partial_xu)(\partial_x^2u)^2~dx.
\]
Consequently
\begin{align*}
\frac{d}{dt}\|\partial_x^2u(t)\|_{L^2}^2
=&-5\int^\infty_{-\infty}(\partial_xu)(\partial_x^2u)^2(x,t)~dx \\
\leq& -5m(t)\|\partial_x^2u(t)\|_{L^2}^2
=-5m(0)q^{-1}(t)\|\partial_x^2u(t)\|_{L^2}^2
\end{align*}
for all $t\in(0,T_2)$. The last equality uses \eqref{def:m}.
We then integrate it and use \eqref{I:s=1} to arrive at that
\begin{align*}
\log \|\partial_x^2u(t)\|_{L^2}^2-\log\|\phi''\|_{L^2}^2\leq& -5m(0)\int^t_0q^{-1}(\tau)~d\tau \\
\leq&\frac{5}{(1-\epsilon)^2}\Big(\log\frac{1}{1-\epsilon}-\log q(t)\Big)
\end{align*}
for all $t\in[0,T_2]$. Therefore
\begin{equation}\label{I:uxx}
\|\partial_x^2u(t)\|_{L^2}\leq \|\phi''\|_{L^2} (1-\epsilon)^{-5/2(1-\epsilon)^2} q^{-5/2(1-\epsilon)^2}(t)
\end{equation}
for all $t\in[0,T_2]$. 

To proceed, we use \eqref{eq:K1}, where $\delta(t)=q^{5/(1-\epsilon)^2-2}(t)=:q^\sigma(t)$,
and \eqref{I3:v1}, \eqref{I:uxx} to show that 
\begin{align*}
|K_1(t;x)|\leq& \frac{6}{\alpha}C_1\delta^{-\alpha}(t)q^{-1}(t)\\
&+\frac{1}{\alpha}\sqrt{\frac{2}{1-2\alpha}}\|\phi''\|_{L^2} (1-\epsilon)^{-5/2(1-\epsilon)^2}
\delta^{1/2-\alpha}(t)q^{-5/2(1-\epsilon)^2}(t) \\ 
\leq & \frac{1}{\alpha}(6C_1+C_2)q^{-1-\sigma\alpha}(t)
\leq  \frac{1}{\alpha}(6C_1+C_2)q^{-2}(t)
\end{align*}
for all $t\in [0,T_2]$ for all $x\in\mathbb{R}$. 
The second inequality uses \eqref{I:C012}, and 
the last inequality uses Lemma~\ref{lem:q} and that $\sigma\alpha<1$ (see Theorem~\ref{thm:1/3}). 
Note from \eqref{eqn:v1} that 
\[
\frac{dv_1}{dt}=-v_1^2-K_1(t;x)\leq |K_1(t;x)|.
\]
In case $K_1(t;x)\geq 0$, an integration then leads to that
\begin{align*}
v_1(t;x)\leq& \|\phi'\|_{L^\infty}+\frac{1}{\alpha}(6C_1+C_2)\int^t_0 q^{-2}(\tau)~d\tau \\
<&\frac12C_1q^{-1}(t)
-\frac{1}{\alpha}(6C_1+C_2)\frac{1}{(1-\epsilon)^3}\frac{1}{m(0)}(q^{-1}(t)-(1-\epsilon)) \\
<&\frac12C_1q^{-1}(t)-
\frac{1}{\alpha}(6C_1+C_2)\frac{1}{(1-\epsilon)^3}\frac{1}{m(0)}q^{-1}(t) \\
<&C_1q^{-1}(t).
\end{align*}
The second inequality uses \eqref{I:C012}, Lemma~\ref{lem:q} and \eqref{I:s>1}, 
and the last inequality uses \eqref{A3:m2}. 
In case $v_1(t;x)<0$, on the other hand, we may assume, without loss of generality, that 
$\|\phi'\|_{L^\infty}=-m(0)$ and \eqref{def:m} and \eqref{I:C012} imply that
\[
v_1(t;x)\geq m(t)=m(0)q^{-1}(t)>-\frac12C_1q^{-1}(t).
\]
Therefore \eqref{claim3:v1} holds throughout the interval $[0,T_2]$. 

\

The remainder of the proof is nearly identical to that in the previous section. Hence we omit the detail. 

\subsection*{Acknowledgment}
The authors thank the anonymous referees for their careful reading of the manuscript 
and for their many useful suggestions.
VMH is supported by the National Science Foundation grants DMS-1008885 and CAREER DMS-1352597, and by an Alfred P. Sloan Foundation fellowship. 

\bibliographystyle{amsalpha}
\bibliography{breakingBib}

\end{document}